\def\titlerunning#1{\gdef\titrun{#1}}
\def\author#1{\gdef\autrun{\def\and{\unskip, }#1}\gdef\@author{#1}}
\def\address#1{{\def\and{\\\hspace*{18pt}}\renewcommand{\thefootnote}{}%
\footnote {#1}}%
\markboth{\autrun}{\titrun}}
\def\email#1{\hspace*{4pt}{\em e-mail}: #1}
\def\MSC#1{{\renewcommand{\thefootnote}{}%
\footnote{\emph{Mathematics Subject Classification (2020):} #1}}}
\def\keywords#1{\par\medskip
\noindent\textbf{Keywords:} #1}
\newtheorem{theorem}{Theorem}[section]
\newtheorem{prop}[theorem]{Proposition}
\newtheorem{lemma}[theorem]{Lemma}
\newtheorem{reason}{Reason}
\theoremstyle{definition}
\numberwithin{equation}{section}
\def\cH{\mathcal H}
\def\PG{{\rm PG}}
\def\GL{{\rm GL}}
\def\x{\boldsymbol x}
\def\y{\boldsymbol y}
\def\F{\mathbb F}
\def\Gq{{\cal G}_q(k, \perp)}
\def\Gqthree{{\cal G}_q(3, \perp)}
\def\Tr{\mathrm Tr}
\begin{document}


\baselineskip=16pt

\titlerunning{}

\title{A clique-free pseudorandom subgraph of the pseudo polarity graph}

\author{Sam Mattheus
\and 
Francesco Pavese}

\date{}

\maketitle

\address{S. Mattheus: Department of Mathematics, Vrije Universiteit Brussel, Pleinlaan 2, 1050, Brussel, Belgium; \email{sam.mattheus@vub.ac.be}
\and
F. Pavese: Dipartimento di Meccanica, Matematica e Management, Politecnico di Bari, Via Orabona 4, 70125 Bari, Italy; \email{francesco.pavese@poliba.it}}


\MSC{Primary 05D10. Secondary 05E30; 51E20.}

\begin{abstract}
We provide a new family of $K_k$-free pseudorandom graphs with edge density $\Theta(n^{-1/(k-1)})$, matching a recent construction due to Bishnoi, Ihringer and Pepe \cite{BIP2020}. As in the former result, the idea is to use large subgraphs of polarity graphs, which are defined over a finite field $\F_q$. While their construction required $q$ to be odd, we will give the first construction with $q$ even.
\keywords{clique-free; pseudorandom graph; polarity graph; Ramsey number.}
\end{abstract}

\section{Introduction}

As described in the excellent survey by Krivelevich and Sudakov \cite{KS}, pseudorandom graphs are deterministic graphs that behave like random graphs with the same edge density. There are many ways, both combinatorially and algebraically, to characterize pseudorandomness quantitatively and we refer to the aforementioned survey for an overview. Their systematic study was initiated by Thomason in \cite{T87a,T87b} and they have found many applications in graph theory and theoretical computer science ever since. 

Pseudorandom graphs can be defined as follows. Recall that an \emph{$(n,d,\lambda)$-graph} is a $d$-regular graph on $n$ vertices such that every eigenvalue except the largest is bounded in absolute value by $\lambda$. Note that when considering eigenvalues of graphs, we refer to the eigenvalues of its adjacency matrix. Whenever $\lambda$ is much smaller than $d$, an $(n,d,\lambda)$-graph has some strong properties in common with random graphs, see for example \cite[Chapter 9.2]{AS16} for several such results. Moreover, if $d \le (1-\epsilon)n$, for some small positive $\epsilon$, one can find $\lambda = \Omega(\sqrt{d})$ by considering the trace of the square of the adjacency matrix. Therefore, $(n, d, \lambda)$-graphs such that $\lambda = O(\sqrt{d})$ are said to be {\em optimally pseudorandom graphs} and instances of these graphs can be obtained from strongly regular graphs, Cayley graphs, or from the polarity of a finite projective space or of a generalized polygon \cite{KS}.

As one of their most important applications, optimally pseudorandom graphs provide explicit lower bounds for Ramsey numbers. Recall that the (multicolour) Ramsey number $r(k_1, \dots, k_m)$ is the minimum number $n$ of vertices such that every $m$-colouring graph of the complete graph $K_n$ has a monochromatic copy of $K_{k_i}$ for some $i \in \{1,\dots,m\}$. In this case, one requires that the family of pseudorandom graphs has no cliques of a fixed size, while still having the largest edge density $d/n$ possible. It is known (using the expander mixing lemma, see for example \cite[Corollary 9.2.5]{AS16}) that the edge density of an optimally pseudorandom $K_k$-free graph is $O(n^{-1/(2k-3)})$. If optimally pseudorandom graphs with this edge density exist, it follows from the results of Mubayi and Verstra\"ete \cite{MV}, and the multicolour generalization by He and Wigderson \cite{HW20}, 
that the determination of the off-diagonal Ramsey number $r(k_1,\dots,k_m, l)$ for fixed $k_1,\dots,k_m \geq 3$ and $l \rightarrow \infty$ is essentially solved.
Therefore the question of finding optimally pseudorandom $K_k$-free graphs with highest possible edge density is one of the most interesting and challenging open problems in the theory of pseudorandom graphs.

Optimally pseudorandom $K_k$-free graphs with edge density $d/m = \Theta(n^{-1/(2k-3)})$ are only known to exist in the case when $k = 3$, by a construction of Alon \cite{A}. In \cite{AK}, Alon and Krivelevich obtained optimally pseudorandom $K_{k}$-free graphs with $\frac{d}{n} = \Omega \left( n^{-1/(k-2)} \right)$, by considering a symmetric non--alternating bilinear form of $\F_q^{k-1}$, $q$ even. Recently, Bishnoi, Ihringer and Pepe \cite{BIP2020}, by using a symmetric bilinear form of $\F_q^k$, $q$ odd, provided a construction of optimally pseudorandom $K_{k}$-free graphs with $\frac{d}{n} = \Omega \left(n^{-1/(k-1)} \right)$ and this is currently the best known construction in terms of edge density for optimally pseudorandom $K_k$-free graphs, $k \ge 4$. Here we give a new family of optimally pseudorandom $K_{k}$-free graphs with $\frac{d}{n} = \Omega \left( n^{-1/(k-1)} \right)$, arising from a symmetric non-alternating bilinear form of $\F_q^k$, $q$ even. Although the underlying construction is different, this family of graphs extends the result for $k=3$ in \cite{MP2019} to higher dimensions.


\section{Polarity graphs}

%

Before we get to the construction, we first introduce some terminology from finite geometry. Let $\PG(k-1, q)$ be the $(k-1)$--dimensional projective space over the finite field $\F_q$. Points and hyperplanes of $\PG(k-1, q)$ can be described as follows. Define an equivalence relation on the set of all nonzero vectors $(x_1, \dots, x_k)$ in $\F_q^k$, by saying that two vectors are equivalent if one is a multiple of the other by an element of $\F_q$. The points of $\PG(k-1, q)$ as well as the hyperplanes can be represented by the equivalence classes of this equivalence relation. A point $(x_1, \dots, x_k)$ belongs to the hyperplane $(y_1, \dots, y_k)$ if and only if $\sum_{i = 1}^k x_i y_i = 0$. Hence the hyperplane $(a_1, \dots, a_k)$ consists of the set of points satisfying the relation $a_1X_1 + \dots + a_kX_k = 0$. 

A {\em polarity} $\perp$ of $\PG(k-1, q)$ is associated to a non--degenerate reflexive sesquilinear form $\beta$ of $\F_q^k$. In particular, given any representative vector $\x$ of a point in $\PG(k-1, q)$, the polarity sends $\x$ to the hyperplane $\x^\perp = \{\y \in \PG(k-1, q) \colon \beta(\x, \y) = 0\}$, whereas for a projective subspace $S = \langle \x_1, \dots, \x_m \rangle$, we define $S^\perp = \cap_i \x_i^\perp$. In this way, $\perp$ is an involutory bijective map sending points to hyperplanes and vice versa, reversing incidence. A point $\x$ is called {\em absolute} with respect to $\perp$ if $\x \in \x^\perp$. If $\beta$ is alternating or Hermitian, then the polarity $\perp$ is called {\em symplectic} or {\em unitary}; if $q$ is odd and $\beta$ is symmetric, then $\perp$ is said {\em orthogonal}, whereas if $q$ is even and $\beta$ is symmetric non--alternating, then $\perp$ is {\em pseudo-symplectic}. A map $g$ of $\GL(\F_q^k)$ is an {\em isometry of $\beta$} if $\beta(\x^g, \y^g) = \beta (\x, \y)$, for all $\x, \y \in \F_q^k$. Similarly two sesquilinear forms $\beta, \gamma$ of $\F_q^k$ are {\em equivalent} if there is a map $g \in \GL(\F_q^k)$ and $a \in \F_q^\times$ such that $\gamma(\x, \y) = a \beta(\x^g, \y^g)$, for all $\x, \y \in \F_q^k$. To any linear map $g \in \GL(\F_q^k)$ corresponds a projectivity of $\PG(k-1, q)$, i.e. a bijection between projective subspaces of $\PG(k-1, q)$ of given dimension preserving incidences. A projectivity of $\PG(k-1, q)$ associated with an isometry of $\beta$ {\em preserves the polarity $\perp$.} We shall find it helpful to represent linear maps of $\F_q^k$ with members of $\GL(k, q)$ acting on the left.      

The {\em polarity graph} $\Gq$ of $\PG(k-1, q)$ with respect to a polarity $\perp$ is the graph having as vertex set the points of $\PG(k-1, q)$ and two distinct points $\x, \y$ are adjacent if and only if $\x \in \y^\perp$. Polarity graphs provide a good basis for clique-free pseudorandom graphs for three reasons.

\begin{reason}
	The eigenvalues of $\Gq$ are $d = (q^{k-1}-1)/(q-1)$ and $\pm q^{(k-1)/2} = O(\sqrt{d})$. 
\end{reason}

\begin{proof}
	It suffices to see that the square of its adjacency matrix $A$ satisfies
	\begin{align*}
		A^2 = q^{k-2}I +  \frac{q^{k-2} -1}{q-1}J,
	\end{align*}
	where $I$ and $J$ denote the identity matrix and the all-one matrix of appropriate size respectively.
\end{proof}	

\begin{reason}
	Polarity graphs have edge density $d/n = \Theta(1/q) = \Theta(n^{-1/(k-1)})$.
\end{reason}
\begin{proof}
	The number of vertices is $(q^k-1)/(q-1)$ and each vertex is adjacent to $(q^{k-1}-1)/(q-1)$ vertices, giving the result.
\end{proof}

This implies that they are pseudorandom and quite dense, but we have to be careful as they are not simple graphs. This can be fixed by taking a subgraph that avoids the vertices with loops, which correspond to absolute points. A nice property of pseudorandom graphs is that after taking a large subgraph, we again find a pseudorandom graph of the same edge density.

\begin{prop}\label{subgraph}
	Consider a $d_H$-regular induced subgraph $H$ of a pseudorandom $(n,d,\lambda)$-graph $G$ of positive density, i.e. $|V(H)| := n_H = \alpha n$, $\alpha > 0$. Then $H$ is again pseudorandom and equally dense, i.e. $\Theta(d/n) = \Theta(d_H/n_H)$.
\end{prop}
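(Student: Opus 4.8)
The plan is to prove the two assertions — comparable edge density and preserved pseudorandomness — with two classical spectral tools: the expander mixing lemma for the density statement, and Cauchy's interlacing theorem for the eigenvalue bound.

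First I would pin down $d_H$ in terms of $d$. Applying the expander mixing lemma (see \cite[Corollary 9.2.5]{AS16}) to the vertex set $S = V(H)$ of size $n_H$ inside $G$, the number $e(H)$ of edges induced on $S$ satisfies
\[
\left| e(H) - \frac{d\, n_H^2}{2n} \right| \le \frac{\lambda\, n_H}{2}.
\]
Since $H$ is $d_H$-regular we also have $e(H) = \tfrac12 d_H n_H$; substituting and dividing by $n_H/2$ gives $\left| d_H - d n_H/n \right| \le \lambda$, that is $d_H = \alpha d + O(\lambda)$. Because $G$ is pseudorandom we have $\lambda = o(d)$ (indeed $\lambda = O(\sqrt d)$ in the cases of interest here) and $\alpha > 0$ is a fixed constant, so $d_H = (1+o(1))\alpha d = \Theta(d)$. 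Dividing by $n_H = \alpha n$ then yields $d_H/n_H = (1+o(1))\, d/n$, which is precisely the claimed $\Theta(d/n) = \Theta(d_H/n_H)$.

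Next I would control the non-trivial eigenvalues of $H$. Write $A$ and $A_H$ for the adjacency matrices of $G$ and $H$, and note that $A_H$ is the principal submatrix of $A$ indexed by $V(H)$. Let $\mu_1 \ge \dots \ge \mu_n$ be the eigenvalues of $A$ and $\theta_1 \ge \dots \ge \theta_{n_H}$ those of $A_H$. Since $H$ is $d_H$-regular, the all-ones vector shows $\theta_1 = d_H$. By Cauchy's interlacing theorem, $\mu_i \ge \theta_i \ge \mu_{i + (n - n_H)}$ for every $i$. For $i \ge 2$ the upper inequality gives $\theta_i \le \mu_i \le \mu_2 \le \lambda$, while the smallest eigenvalue satisfies $\theta_{n_H} \ge \mu_n \ge -\lambda$; hence $|\theta_i| \le \lambda$ for all $2 \le i \le n_H$. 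Thus $H$ is an $(n_H, d_H, \lambda_H)$-graph with $\lambda_H \le \lambda$.

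Combining the two steps, $\lambda_H \le \lambda = o(d) = o(d_H)$, and moreover $\lambda_H = O(\sqrt d) = O(\sqrt{d_H})$ whenever $G$ is optimally pseudorandom, since $d_H = \Theta(d)$; so $H$ inherits the pseudorandomness of $G$ at the same edge density. There is no serious obstacle here: the only points needing care are verifying that the error term $O(\lambda)$ in the density estimate is genuinely of lower order — which is exactly where the hypotheses $\alpha > 0$ fixed and $\lambda \ll d$ enter — and checking both the upper and lower interlacing inequalities, so that all eigenvalues other than $d_H$ are bounded by $\lambda$ in absolute value rather than merely from above.
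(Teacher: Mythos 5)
Your proof is correct and follows exactly the route the paper intends: its own ``proof'' is a one-line pointer to the expander mixing lemma and eigenvalue interlacing, and you have simply carried out both of those steps in full (the mixing lemma pinning down $d_H = \alpha d + O(\lambda) = \Theta(d)$, and Cauchy interlacing of the principal submatrix $A_H$ giving $|\theta_i| \le \lambda$ for all $i \ge 2$). Nothing further is needed.
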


\begin{proof}
	The proof is another application of the expander mixing lemma which we mentioned before and interlacing of eigenvalues, for which \cite{Hae95} is a standard reference.
\end{proof}

Now upon taking the subgraph of non-absolute points, we even find that it is clique-free.

\begin{reason}
	The subgraph of non-absolute points of $\Gq$ is $K_{k+1}$-free.
\end{reason}
\begin{proof}
	Suppose that $\x_1, \dots, \x_{k+1}$ are pairwise orthogonal points in $\PG(k-1, q)$, then we obtain that the corresponding vectors in $\F_q^{k}$ are linearly independent. 
Therefore, we find $k+1$ linearly independent vectors in a $k$-dimensional vector space, which is a contradiction. 
\end{proof}

Note that if $\perp$ is symplectic, then the subgraph of non-absolute points of $\Gq$ is empty, whereas if $k$ and $q$ are odd, and $\perp$ is orthogonal, then the subgraph of non-absolute points of $\Gq$ is not regular. In all the remaining cases it can be checked that the subgraph of non-isotropic points of $\Gq$ is an optimally pseudorandom $K_{k+1}$-free graph with $d/n = \Theta(n^{-1/(k-1)})$. 

These three reasons, along with the construction of the subgraph of non-absolute points are also contained in \cite{AK}. The main insight of \cite{BIP2020} is that it is possible, when $q$ is odd and $\perp$ orthogonal, to take a large subgraph on non-absolute points which reduces the clique number even further. In this way, they find a subgraph of the same density $d/n = \Theta(n^{-1/(k-1)})$, but which is $K_k$-free. We will employ the same idea for even $q$ and pseudo-symplectic $\perp$.

\section{Construction}


Let $q = 2^h$ and denote by $U_i$ the point of $\PG(k-1, q)$ represented by the vector having one in the $i$-th position and zero elsewhere. Consider the pseudo-polarity $\perp$ associated with the bilinear form $\beta$ defined as follows:
\begin{align*}
& x_1 y_2 + x_2 y_1 + \dots + x_{k-2} y_{k-1} + x_{k-1} y_{k-2} + x_{k} y_{k} && \mbox{ if } k \mbox{ is odd, } \\ 
& x_1 y_2 + x_2 y_1 + \dots + x_{k-1} y_{k} + x_{k} y_{k-1} + x_{k} y_{k} && \mbox{ if } k \mbox{ is even. } 
\end{align*}
The set of absolute points of $\perp$ are all the points of the hyperplane $H_\infty: X_{k} = 0$ and $H_\infty^\perp$ equals $U_{k}$ or $U_{k-1}$, according as $k$ is odd or even, respectively.

Let $T = \{a \in \F_q \;\; | \;\; \Tr(a) = 1\}$ be the elements of absolute trace one, recalling $\Tr: x \in \F_q \mapsto x + x^2 + x^4 + \dots + x^{2^{h-1}} \in \F_2$. It is easily seen that $|T| = q/2$. Fix an element $t_0 \in T$ and define the hyperplane $H_t: t_0 X_{k-1} + t X_{k} = 0$, where $t \in T$. If $k \ge 3$, its image $H_t^\perp$ lies on $\ell$, where 
\[\ell = \begin{cases}
	U_{k-2}U_{k} & \mbox{ if } k \mbox{ is odd, } \\
	U_{k-1}U_{k} & \mbox{ if } k \mbox{ is even. }
\end{cases}\]
When $k$ is odd, we observe that the line $\ell$ is contained in the hyperplane $X_{k-1} = 0$. When $k$ is even, $\ell$ intersects $H_t$, $t \in T$, in one point. 

Let $\cH(k,q)$ be the induced subgraph of $\Gq$ on the vertex set $\cup_{t \in T} H_t \setminus \left(H_\infty \cup \ell\right)$. We will prove in a few steps that this graph is indeed a $K_k$-free pseudorandom graph. Since the definition depends on the parity of $k$, our proofs will also be split into these two cases.

\begin{lemma}\label{transitivity0}
There is a group of projectivities of $\PG(k-1, q)$, $k \ge 3$, which preserves $\perp$ and acts transitively on points of $H_t \setminus \left(H_\infty \cup \ell\right)$, $t \in T$.  
\end{lemma}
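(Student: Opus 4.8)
The plan is to produce the required group explicitly as a group of isometries of $\beta$, since every isometry induces a projectivity preserving $\perp$. The natural isometries to use are the transvections
\[
\tau_{\boldsymbol a}\colon \x \longmapsto \x + \beta(\x,\boldsymbol a)\,\boldsymbol a,
\]
which, because $q$ is even and $\beta$ is symmetric, preserve $\beta$ exactly when $\boldsymbol a$ is isotropic, i.e. $\beta(\boldsymbol a,\boldsymbol a)=0$. As the absolute points are precisely those of $H_\infty$ and $\beta(\x,\x)=x_k^2$, the isotropic vectors are exactly those of $H_\infty$. I would restrict to $\boldsymbol a\in W:=\langle U_1,\dots,U_{k-2}\rangle$, i.e. to isotropic vectors with $a_{k-1}=a_k=0$, and set $G=\langle \tau_{\boldsymbol a}\colon \boldsymbol a\in W\rangle$. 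Two structural facts make $G$ act slicewise. Since every such $\boldsymbol a$ has vanishing $(k-1)$-st and $k$-th coordinates, $\tau_{\boldsymbol a}$ leaves $x_{k-1},x_k$ unchanged and hence stabilises every $H_t\colon t_0X_{k-1}+tX_k=0$. A short computation with the explicit form gives $\beta(U_{k-2},\boldsymbol a)=\beta(U_k,\boldsymbol a)=0$ for $k$ odd and $\beta(U_{k-1},\boldsymbol a)=\beta(U_k,\boldsymbol a)=0$ for $k$ even, so $\tau_{\boldsymbol a}$ fixes both base points of $\ell$ and therefore fixes $\ell$ pointwise. Thus $G$ preserves $\perp$, stabilises each $H_t$, and fixes $\ell$ pointwise, and it remains to establish transitivity on $H_t\setminus(H_\infty\cup\ell)$. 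For this I would pass to the affine chart $x_k=1$, where such a point has frozen coordinate $x_{k-1}=s:=t/t_0$ and free coordinates $(x_1,\dots,x_{k-2})\in\F_q^{k-2}$, and compute the restriction of $\tau_{\boldsymbol a}$ there. The decisive point is how $X_{k-2}$ is paired by $\beta$, and this is exactly where the two parities diverge.

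When $k$ is odd, $X_{k-2}$ is paired with the frozen $X_{k-1}$, so $\beta(\x,\boldsymbol a)$ picks up a constant term $s\,a_{k-2}$ and $\tau_{\boldsymbol a}$ acts on the slice as the affine map $x\mapsto x+\bigl(B(x,\boldsymbol a)+s\,a_{k-2}\bigr)\boldsymbol a$, where $B$ is the symplectic form on the coordinates $1,\dots,k-3$. Evaluating at the origin $O=(0,\dots,0)$ gives $\tau_{\boldsymbol a}(O)=s\,a_{k-2}(a_1,\dots,a_{k-2})$; letting $a_{k-2}$ range over $\F_q^\times$ and $a_1,\dots,a_{k-3}$ range freely, and using that squaring is a bijection on $\F_q$, one reaches in a single step every point of the slice with $x_{k-2}\neq 0$. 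Since $\tau_{cU_{k-2}}$ is the pure translation $x_{k-2}\mapsto x_{k-2}+s\,c^2$ fixing all other coordinates, composing with a suitable such translation also reaches the remaining points with $x_{k-2}=0$. As $\ell$ meets $H_t\setminus H_\infty$ in no point when $k$ is odd, this gives transitivity on $H_t\setminus(H_\infty\cup\ell)=H_t\setminus H_\infty$.

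When $k$ is even, $X_{k-2}$ is instead paired with $X_{k-3}$ while the frozen $X_{k-1}$ is paired with $X_k$; consequently the constant term disappears and $\tau_{\boldsymbol a}$ restricts to the genuine transvection $x\mapsto x+B(x,\boldsymbol a)\,\boldsymbol a$ for the non-degenerate alternating form $B$ on $\F_q^{k-2}$. As $\boldsymbol a$ ranges over $W$ these generate $\mathrm{Sp}(k-2,q)$, which is transitive on the nonzero vectors of $\F_q^{k-2}$. Here I would check that $\ell$ meets the slice in the single point $O=(0,\dots,0)$, which is exactly the point that was removed; since $\mathrm{Sp}(k-2,q)$ fixes $O$ and is transitive on $\F_q^{k-2}\setminus\{O\}$, the group $G$ is transitive on $H_t\setminus(H_\infty\cup\ell)$, as required.

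I expect the only genuine difficulty to be bookkeeping rather than conceptual. One must verify carefully that the chosen generators fix $\ell$ pointwise and stabilise \emph{every} $H_t$ at once (this is in any case forced: stabilising each $H_t$ fixes each pole on $\ell$, and together with the fixed nucleus $H_\infty^\perp$ this already pins down $\ell$ pointwise once $q\ge 4$), and one must correctly locate $\ell\cap H_t$ in the even case as the origin of the affine slice. The conceptual heart is simply the parity-dependent pairing of the coordinate $X_{k-2}$: for odd $k$ it produces affine translations that sweep out the whole slice, whereas for even $k$ it produces honest symplectic transvections whose transitivity on nonzero vectors is classical.
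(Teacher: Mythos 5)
Your proposal is correct and takes essentially the same approach as the paper: both exhibit an explicit group of isometries of $\beta$ that stabilises every $H_t$, fixes $\ell$ pointwise, and acts transitively on each slice $H_t \setminus \left(H_\infty \cup \ell\right)$, with the transitivity argument splitting by parity exactly as in the paper (affine translations sweeping the slice for $k$ odd, the symplectic group on the first $k-2$ coordinates acting on nonzero vectors for $k$ even). Your transvections $\tau_{\boldsymbol a}$, $\boldsymbol a \in \langle U_1, \dots, U_{k-2}\rangle$, generate for $k$ even precisely the copy of $\mathrm{Sp}(k-2,q)$ that the paper writes in block form, and for $k$ odd a group containing the paper's unipotent matrices, so the difference is only in packaging and in that you verify transitivity by direct computation rather than by appeal to the transitivity of the induced pseudo-symplectic group on $H_t$.
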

\begin{proof}
If $k$ is odd, the linear maps given by
$$
\begin{pmatrix}
1 & 0 & \dots & 0 & a_1 & 0 \\
0 & 1 & \dots & 0 & a_2 & 0 \\
\vdots & \vdots & \ddots & \vdots & \vdots & \vdots \\
a_2 & a_1 & \dots & 1 & a_{k-2} & 0 \\
0 & 0 & \dots & 0 & 1 & 0 \\
0 & 0 & \dots & 0 & 0 & 1  
\end{pmatrix},
$$
where $a_1, \dots, a_{k-2} \in \F_q$ are isometries of $\beta$. Similarly, if $k$ is even, the following linear maps are isometries of $\beta$ 
\[
\begin{pmatrix}
	A & \begin{matrix} 0 & 0 \\ \vdots & \vdots\end{matrix} \\
	\begin{matrix} 0 & 0 & \dots \\ 0 & 0 & \dots \end{matrix} & \begin{matrix} 1 & 0 \\ 0 & 1 \end{matrix}
\end{pmatrix},
\]
where $A$ is an isometry of the alternating bilinear form $x_1 y_2 + x_2 y_1 + \dots + x_{k-3} y_{k-2} + x_{k-2} y_{k-3}$. Note that $\perp$ induces a pseudo-symplectic polarity on $H_{t}$, say $\perp_{|H_{t}}$, whose absolute points are those of $H_t \cap H_{\infty}$ and $H_{t} \cap H_{\infty} = (\ell \cap H_{t})^{\perp_{|H_{t}}}$. The projectivities preserving $\perp_{|H_{t}}$ are associated with the isometries given above and the group of these projectivities is transitive on $H_{t} \setminus (\ell \cup H_\infty)$.

Hence, in both cases, the group of projectivities associated with these maps is transitive on points of $H_t \setminus (\ell \cup H_\infty)$, $t \in T$. 
\end{proof}

\begin{prop}
	If $k \ge 3$, the graph $\cH(k,q)$ is an $(n,d,\lambda)$-graph where $\lambda = O(\sqrt{d})$ and 
	\[(n,d) = \begin{dcases}
		\left(\frac{q^{k-1}}{2},\frac{q^{k-2}}{2}\right) & \mbox{ if } k \mbox{ is odd, } \\
		\left(\frac{q^{k-1}-q}{2},\frac{q^{k-2}}{2}\right) & \mbox{ if } k \mbox{ is even.}
	\end{dcases}\]
\end{prop}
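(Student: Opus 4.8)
The plan is to establish the three assertions in turn --- the number of vertices, the $d$-regularity, and the spectral bound $\lambda=O(\sqrt d)$ --- splitting the first two by the parity of $k$. First I would make the vertex set explicit in coordinates. All hyperplanes $H_t$, $t\in T$, share the codimension-two subspace $R:X_{k-1}=X_k=0$, and a point with $X_k\neq 0$ lies on $H_t$ for the unique value $t=t_0X_{k-1}/X_k$, which belongs to $T$ precisely when $\Tr(t_0X_{k-1}/X_k)=1$. Normalising $X_k=1$, the vertices are therefore the points $(x_1,\dots,x_{k-1},1)$ with $x_{k-1}\in S:=\{s\in\F_q:\Tr(t_0s)=1\}$, a set of size $q/2$; the points of $H_\infty$ are automatically excluded (they have $X_k=0$), so only $\ell$ remains to be deleted. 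For $k$ odd, $\ell=U_{k-2}U_k$ meets this set only in $U_{k-2}\in H_\infty$, nothing further is removed, and $n=q^{k-2}\cdot(q/2)=q^{k-1}/2$. For $k$ even, $\ell=U_{k-1}U_k$ meets it in the $q/2$ points with $x_1=\dots=x_{k-2}=0$ and $x_{k-1}\in S$, so $n=(q^{k-2}-1)\cdot(q/2)=(q^{k-1}-q)/2$.

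Next I would compute $|\x^\perp\cap V(\cH(k,q))|$ for an arbitrary vertex $\x$, which simultaneously proves regularity. Viewing $\beta(\x,\cdot)$ as a linear form in $\y$, adjacency $\beta(\x,\y)=0$ is a single linear equation, while $\y\in V(\cH(k,q))$ forces $y_{k-1}\in S$ (and $\y\notin\ell$ when $k$ is even); I also normalise $y_k=1$. The key is to isolate a coordinate of $\y$ whose coefficient is forced nonzero, so that the count does not depend on $\x$. For $k$ odd the term $x_{k-1}y_{k-2}$ of $\beta$ shows the coefficient of $y_{k-2}$ is $x_{k-1}\neq 0$ (as $x_{k-1}\in S$); choosing $y_1,\dots,y_{k-3}$ freely and $y_{k-1}\in S$ and solving for $y_{k-2}$ gives $q^{k-3}\cdot(q/2)=q^{k-2}/2$ neighbours. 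For $k$ even the equation becomes $y_{k-1}=L'(\y)+x_{k-1}+1$, where $L'(\y)=x_1y_2+x_2y_1+\dots+x_{k-3}y_{k-2}+x_{k-2}y_{k-3}$; the requirement $y_{k-1}\in S$ reads $L'(\y)\in S+(x_{k-1}+1)$, and because $\x\notin\ell$ the form $L'$ is nonzero, giving again $(q/2)\cdot q^{k-3}=q^{k-2}/2$ solutions.

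The step I expect to be the main obstacle is to confirm, in the even case, that deleting $\ell$ does not spoil regularity: one must check that the unique candidate solution lying on $\ell$, namely $y_1=\dots=y_{k-2}=0$, is never actually adjacent to $\x$. There $L'(\y)=0$, so adjacency would require $0\in S+(x_{k-1}+1)$, i.e. $\Tr(t_0(x_{k-1}+1))=1$, i.e. $\Tr(t_0x_{k-1})+\Tr(t_0)=1$. But $x_{k-1}\in S$ gives $\Tr(t_0x_{k-1})=1$ and $t_0\in T$ gives $\Tr(t_0)=1$, so this sum is $0$ --- a contradiction. Hence no neighbour of $\x$ lies on $\ell$, every vertex has degree exactly $d=q^{k-2}/2$, and $\cH(k,q)$ is $d$-regular; Lemma~\ref{transitivity0} corroborates that the degree is constant on each $H_t$, the computation above upgrading this to constancy across all $t$.

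Finally, for the spectral bound I would invoke Proposition~\ref{subgraph}. The graph $\cH(k,q)$ is the induced subgraph of the optimally pseudorandom polarity graph $\Gq$ (Reason~1) on a set of non-absolute points, so the relevant adjacency matrix is a genuine principal submatrix with zero diagonal. Since $n/|V(\Gq)|\to 1/2$ the subgraph has positive density, and being $d$-regular it inherits, via Proposition~\ref{subgraph} (Cauchy interlacing and the expander mixing lemma), the bound $\lambda=O(\sqrt d)$. Together with the counts of $n$ and $d$ above, this gives all three claimed properties.
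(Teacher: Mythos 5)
Your proof is correct, and it takes a genuinely different route to the key step, the $d$-regularity. The paper leans on Lemma~\ref{transitivity0}: it normalizes an arbitrary vertex $\x$ to a canonical representative ($(0,\dots,0,t_1,t_0)$ for $k$ odd, $(0,\dots,0,1,t_1,t_0)$ for $k$ even), computes $\x^\perp$ there, and counts the $q^{k-3}$ points of $\x^\perp \cap \left(H_t \setminus (H_\infty \cup \ell)\right)$ for each of the $|T| = q/2$ hyperplanes. You instead work with an arbitrary vertex in the affine chart $X_k = 1$ and count neighbours directly: for $k$ odd by solving the adjacency equation for $y_{k-2}$, whose coefficient $x_{k-1}$ is nonzero because $x_{k-1} \in S$ and $0 \notin S$; for $k$ even by counting fibres of the linear form $L'$ over the translate $S + (x_{k-1}+1)$, with $L' \neq 0$ precisely because $\x \notin \ell$. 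Your explicit check that the deletion of $\ell$ costs no neighbours in the even case --- $\Tr(t_0(x_{k-1}+1)) = \Tr(t_0 x_{k-1}) + \Tr(t_0) = 1 + 1 = 0$, so the candidate with $y_1 = \dots = y_{k-2} = 0$ fails the membership condition --- is exactly the paper's remark that $t \neq t_0 + t_1$, rewritten in your coordinates. What your route buys is self-containedness: the proposition no longer depends on the transitivity lemma (you cite it only as corroboration), and the chart description of the vertex set as $(x_1,\dots,x_{k-1},1)$ with $x_{k-1} \in S$ makes the vertex count transparent, including why $\ell$ is harmless for $k$ odd. What the paper's route buys is economy and reuse: Lemma~\ref{transitivity0} is needed anyway for the induction proving $K_k$-freeness, so a single computation at one canonical point settles the degree for all vertices at once. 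Both arguments conclude identically via Proposition~\ref{subgraph}; your observation that the relevant matrix is a principal submatrix with zero diagonal (since all vertices are non-absolute) is a careful touch the paper leaves implicit.
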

\begin{proof}
Since $|H_t \setminus H_\infty| = q^{k-2}$ and $H_t \cap H_{t'} = H_t \cap H_\infty = H_{t'} \cap H_\infty$, for $t, t' \in T$, $t \ne t'$, it follows that $|\cup_{t \in T} H_t \setminus H_\infty| = q^{k-1}/2$. Recall that when $k$ is odd, the line $\ell$ is contained in $H_0: X_{k-1} = 0$ which has no impact on $V(\cH(k,q))$, as $0 \notin T$. On the other hand for $k$ even, one point is removed from every $H_t, t \in T$.

Now consider a point $\x \in V(\cH(k,q))$; hence there exists $t_1 \in T$ such that $\x \in H_{t_1}$. By Lemma \ref{transitivity0}, we may assume without loss of generality that $\x$ equals $(0, \dots, 0, t_1, t_0)$, if $k$ is odd, or $(0, \dots, 0, 1, t_1, t_0)$, if $k$ is even. In the former case $\x^\perp : t_1 X_{k-2} + t_0 X_k = 0$ and $\x^\perp \cap \left(H_t \setminus \left(H_\infty \cup \ell\right)\right)$, $t \in T$, consists of the $q^{k-3}$ points $(\ast, \dots, \ast, t_0^2, t t_1, t_0 t_1)$. In the latter case  $\x^\perp : X_{k-3} + t_0 X_{k-1} + (t_0 + t_1) X_k = 0$ and $\x^\perp \cap \left(H_t \setminus \left(H_\infty \cup \ell\right)\right)$, $t \in T$, consists of the $q^{k-3}$ points $(\ast, \dots, \ast, t_0(t + t_0 + t_1), \ast, t, t_0)$. Note that $t \ne t_0 + t_1$, otherwise $1 = \Tr(t) = \Tr(t_0 + t_1) = \Tr(t_0) + \Tr(t_1) = 0$, a contradiction.

The fact that $\lambda = O(\sqrt{d})$ follows from Proposition \ref{subgraph}.
\end{proof}

\begin{prop}
The graph $\cH(k,q)$ is $K_k$-free. 
\end{prop}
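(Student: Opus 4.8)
The plan is to argue by contradiction. Suppose $\cH(k,q)$ contains a clique $\{\x_1, \dots, \x_k\}$; since adjacency in $\Gq$ is orthogonality with respect to $\perp$, the $\x_i$ are pairwise orthogonal non-absolute points. Repeating the linear-independence argument used to show $K_{k+1}$-freeness (if $\sum_i c_i \x_i = 0$ with $c_1 \neq 0$, then pairing with $\x_1$ forces $c_1 \beta(\x_1,\x_1) = 0$, contradicting $\beta(\x_1,\x_1) \neq 0$), these $k$ vectors are linearly independent and hence form a basis of $\F_q^k$ whose Gram matrix is diagonal with nonzero diagonal entries. As every vertex of $\cH(k,q)$ lies off $H_\infty$, we have $(\x_i)_k \neq 0$ for each $i$, so after rescaling the representatives we may assume $(\x_i)_k = 1$; recalling that $\beta(\x,\x) = x_k^2$ in both parities, this normalisation gives $\beta(\x_i,\x_i) = 1$ for every $i$.

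The key step is to expand the fixed vector $U_k = (0,\dots,0,1)$ in this self-polar basis. Writing $U_k = \sum_j \lambda_j \x_j$ and pairing the identity with $\x_j$, orthogonality together with $\beta(\x_j,\x_j)=1$ yields $\lambda_j = \beta(U_k, \x_j)$, a quantity I can read off directly from the two forms: $\beta(U_k, \y) = y_k$ when $k$ is odd and $\beta(U_k,\y)= y_{k-1}+y_k$ when $k$ is even. Comparing a single coordinate of the resulting identity then produces one linear relation among the penultimate coordinates $s_j := (\x_j)_{k-1}$. For $k$ odd one has $U_k = \sum_j \x_j$, and reading off the $(k-1)$-st coordinate gives $\sum_j s_j = 0$; for $k$ even one has $U_k = \sum_j (s_j+1)\x_j$, and reading off the $k$-th coordinate gives $1 = \sum_j (s_j+1) = \sum_j s_j$, using $k \equiv 0$ in characteristic two, so $\sum_j s_j = 1$.

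Finally I would feed in the defining membership condition of $\cH(k,q)$. Each $\x_i$ lies on some $H_{t_i}$ with $t_i \in T$, which after the normalisation $(\x_i)_k = 1$ reads $t_0 s_i = t_i$ with $\Tr(t_i) = 1$, i.e. $\Tr(t_0 s_i) = 1$ for every $i$. Summing over the $k$ vertices and using additivity of the trace, $\sum_i \Tr(t_0 s_i) = \Tr\!\left(t_0 \sum_i s_i\right)$ equals $\Tr(0) = 0$ when $k$ is odd and $\Tr(t_0) = 1$ when $k$ is even; on the other hand the left-hand side is a sum of $k$ ones in $\F_2$, equal to $1$ for $k$ odd and $0$ for $k$ even. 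Either way this forces $0 = 1$, the desired contradiction, so no such clique exists. I expect the only delicate point to be the bookkeeping across the two parities, arranged precisely so that the value of $\sum_i s_i$ coming from the self-polar expansion is mismatched with the parity of $k$ in the trace sum; conceptually the mechanism—trace additivity applied to a linear relation forced by the orthogonal basis—is identical in both cases.
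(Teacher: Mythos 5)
Your proof is correct, but it is genuinely different from the paper's. The paper argues by induction on $k$: after using a transitivity lemma to normalise one clique vertex $\x$, it computes the bilinear form induced by $\perp$ on $\x^\perp$, exhibits an explicit change of coordinates showing this induced structure is equivalent to the same configuration (form plus family of hyperplanes $H_t$, $t \in T$) one dimension lower, and concludes from the base case $k=2$, where a short trace computation shows no two vertices are orthogonal. You instead give a direct, global argument: the $k$ clique vertices are pairwise orthogonal and non-absolute, hence form an orthogonal basis of $\F_q^k$ with $\beta(\x_i,\x_i)=x_{i,k}^2$, which after normalising $(\x_i)_k=1$ lets you expand $U_k=\sum_j \beta(U_k,\x_j)\,\x_j$ and extract the relation $\sum_j s_j = 0$ ($k$ odd) or $\sum_j s_j = 1$ ($k$ even) on the coordinates $s_j=(\x_j)_{k-1}$; additivity of the trace applied to the membership conditions $\Tr(t_0 s_i)=1$ then gives $\Tr(t_0\sum_i s_i) \equiv k \pmod 2$, which contradicts the extracted relation in both parities. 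I verified the coordinate computations ($\beta(U_k,\y)=y_k$ for $k$ odd, $y_{k-1}+y_k$ for $k$ even, and the parity bookkeeping) and they check out, including the case $k=2$, which your argument handles uniformly rather than as a separate base case. Your route is shorter and self-contained: it needs neither the transitivity lemma nor the change-of-basis equivalences, and it even proves slightly more, since you never use the deletion of the line $\ell$, so the larger induced subgraph on $\cup_{t\in T} H_t \setminus H_\infty$ is already shown to be $K_k$-free (the removal of $\ell$ is only needed for regularity). What the paper's induction buys in exchange is structural insight: it exhibits the self-similarity of the construction --- the neighbourhood of a vertex carries a copy of the same configuration in dimension one lower --- which parallels the odd-characteristic construction of Bishnoi, Ihringer and Pepe and explains where the definition of $\cH(k,q)$ comes from.
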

\begin{proof}
By induction on $k$. The statement holds true for $k = 2$. Indeed, in this case no two points of $\cup_{t \in T} H_t \setminus \left(H_\infty \cup \ell\right)$ are orthogonal with respect to $\beta$. To see this fact, let $\x = (t_1, t_0), \x' = (t_2, t_0) \in V(\cH(2,q))$ and observe that $\beta(\x, \x') = t_0 (t_0 + t_1 + t_2) \ne 0$, since $t_0, t_1, t_2 \in T$. 

Assume that $\cH(k,q)$ is $K_{k}$-free. We show that the claim holds true for $k+1$. Let us consider a point $\x \in V(\cH(k+1,q))$. Hence there is $t_1 \in T$ such that $\x \in \left(H_{t_1} \setminus \left(H_\infty \cup \ell\right)\right)$. By Lemma~\ref{transitivity0}, we may assume that $\x$ is the point $(0, 0, \dots, 0, t_1, t_0)$ or $(0, 0, \dots, 1, t_1, t_0)$, according as $k$ is even or odd respectively. Thus $\x^\perp \cap H_t$ has equations 

\begin{align}
	&	\begin{dcases} 
		t_1 X_{k-1} + t_0 X_{k+1} = 0 &  \\ 
		t_0 X_{k} + t X_{k + 1} = 0 & \label{hyp_even} \\ 
		\end{dcases} \\
	& \nonumber \\
	&	\begin{dcases} 
		X_{k-2} + t_0 X_{k} + (t_0 + t_1) X_{k+1} = 0 &  \\ 
		t_0 X_{k} + t X_{k + 1} = 0 & \label{hyp_odd}\\ 
		\end{dcases}
\end{align}
according as $k$ is even or odd respectively. The polarity induced by $\perp$ on $\x^\perp$ has bilinear form 
\begin{align}
& x_1 y_2 + x_2 y_1 + \dots + x_{k-1} y_{k} + x_{k} y_{k-1} + \left( \frac{t_1}{t_0} \right)^2 x_{k-1} y_{k-1} & \mbox{ if } k \mbox{ is even, } \label{form1_even}\\ 
& x_1 y_2 + x_2 y_1 + \dots + x_{k - 4} y_{k - 3} + x_{k - 3} y_{k - 4} + t_0 (x_{k} y_{k-1} + x_{k-1} y_{k}) \nonumber \\ 
& \quad + (t_0 + t_1) (x_{k+1} y_{k-1} + x_{k-1} y_{k+1}) + x_{k} y_{k+1} + x_{k+1} y_{k} + x_{k+1} y_{k+1} & \mbox{ if } k \mbox{ is odd.}  \label{form1_odd}
\end{align}

By applying the invertible linear map 

\begin{table}[h]
	\centering
	\begin{tabular}{l l}
		$\begin{cases}
			X'_i = X_i, \;\; 1 \le i \le k-2, \\
			X_{k-1}' = \frac{t_0}{t_1} X_k, \\
			X_k' = \frac{t_1}{t_0} X_{k-1}, \\
			X_{k+1}' = X_{k+1},
		\end{cases}$
	& $\begin{cases}
		X'_i = X_i, \;\; 1 \le i \le k-3, \\
		X_{k-2}' = t_0 X_{k-1} + X_{k+1}, \\
		X_{k-1}' = X_k + \frac{t_0 + t_1}{t_0} X_{k+1}, \\
		X_{k}' = X_{k+1}, \\
		X_{k+1}' = X_{k-2},
	\end{cases}$ \\
    $\mbox{}$ \\
	if $k$ is even and & if $k$ is odd,
	\end{tabular}
\end{table}	
it is easily seen that the bilinear forms \eqref{form1_even}, \eqref{form1_odd} are equivalent to the bilinear forms of $\F_q^k$ given by
\begin{align*}
& x'_1 y'_2 + x'_2 y'_1 + \dots + x'_{k-1} y'_{k} + x'_{k} y'_{k-1} + x'_{k} y'_{k} & & \mbox{ if } k \mbox{ is even, } \\ 
& x'_1 y'_2 + x'_2 y'_1 + \dots + x'_{k-1} y'_{k-2} + x'_{k-2} y'_{k-1} + x'_{k} y'_{k} & & \mbox{ if } k \mbox{ is odd, } 
\end{align*}
and the hyperplanes $\x^\perp \cap H_t$ \eqref{hyp_even}, \eqref{hyp_odd} of $\x^\perp$ are mapped to the hyperplanes of $\PG(k-1,q)$ given by
\begin{align*}
& t_1 X'_{k-1} + t X'_k = 0 & & \mbox{ if } k \mbox{ is even, } \\ 
& t_0 X'_{k-1} + (t_0 + t_1 + t) X'_k = 0 & & \mbox{ if } k \mbox{ is odd. } 
\end{align*}
As $t_1 \in T$ and $\{t \,\, | \,\, t \in T\} = \{t_0 + t_1 + t \,\, | \,\, t \in T\}$ respectively, the assertion follows by the induction hypothesis.
\end{proof}

\section{Conclusion}

In this short note, we gave a new construction for optimally pseudorandom $K_k$-free graphs of edge density $d/n = \Theta(n^{-1/(k-1)})$ from polarity graphs, where the characteristic of the underlying finite field is even. It would be interesting to provide geometrical constructions leading to even denser pseudorandom graphs, ideally breaking the barrier of $\Theta(n^{-1/k})$. The idea of taking large subgraphs of polarity graphs applied here and in \cite{BIP2020} seems to reach its limits, at least for case of $k=3$ on which the induction relies: a $K_3$-free graph obtained from any polarity graph $\Gqthree$ has at most $\alpha(\Gqthree) = \Theta(q^{3/2})$ vertices as shown by Mubayi and Williford \cite{MW07}. 

\smallskip
{\footnotesize
\noindent\textit{Acknowledgments.}
This work was supported by the Italian National Group for Algebraic and Geometric Structures and their Applications (GNSAGA-- INdAM).
}

\bibliographystyle{alpha}

\end{document}